\newtheorem{theorem}{Theorem}
\newtheorem{lemma}[theorem]{Lemma}
\newtheorem{cor}[theorem]{Corollary}
\newtheorem*{thm}{Theorem}
\begin{document}
\title{Indecomposable non-orientable $PD_3$-complexes}

\author{J.A. Hillman }
\address{School of Mathematics and Statistics, University of Sydney,
 \newline
 NSW 2006,  Australia}
 
\email{jonathan.hillman@sydney.edu.au}

\keywords{$PD_3$-complex, non-orientable, virtually free}

\subjclass[2000]{Primary 57N13}

\begin{abstract}
We show that the orientable double covering space of an indecomposable,
non-orientable $PD_3$-complex has torsion free fundamental group.
\end{abstract}

\maketitle

One of the foundational results of Wall on Poincar\'e duality complexes
was the fact that there is a well defined notion of connected sum for such complexes \cite{wall}.
In dimensions $n>2$ the fundamental group of a connected sum of two $PD_n$-complexes
is the free product of the groups of the summands.
This notion is of particular interest when $n=3$, 
for by the well-known work of Kneser and Milnor
every closed orientable 3-manifold has an essentially unique 
factorization into indecomposable 3-manifolds.
(The corresponding assertion for closed non-orientable 3-manifolds 
is slightly more complicated.)
Moreover such a 3-manifold is indecomposable with respect to 
connected sum if and only if its fundamental group is
indecomposable with respect to free product.
It is perhaps less widely known that Tura'ev has shown that 
each of these results extends to the
context of $PD_3$-complexes \cite{tu}.

Indecomposable orientable 3-manifolds are either aspherical, 
have finite fundamental group or have fundamental group $\mathbb{Z}$.
This is no longer true for $PD_3$-complexes, 
although Crisp has shown that (in the orientable case) the indecomposables 
are either aspherical or have virtually free fundamental group \cite{Cr}.
There are examples of the latter kind with fundamental group neither finite nor 
$\mathbb{Z}$ \cite{Hi12}.

Let $X$ be an indecomposable $PD_3$-complex, 
with fundamental group $\pi$ and orientation character $w$.
In \cite{Hi12} we showed that if $w\not=1$ and $\pi$ is virtually free then 
$X$ is homotopy equivalent to $S^2\tilde\times{S^1}$ or $RP^2\times{S^1}$,
and so $\pi\cong\mathbb{Z}$ or $\mathbb{Z}\oplus{\mathbb{Z}/2\mathbb{Z}}$.
In particular, $\pi^+=\mathrm{Ker}(w)$ is torsion free.
We shall show that this remains true if $w\not=1$ and $\pi$ is {\it not\/} virtually free.
This result is surely well-known for 3-manifolds.
We give a short proof for this case in \S1,
which uses the ``projective plane theorem" of \cite{Ep} and a result from \cite{Hi12}.
(The fact that $RP^2$ does not bound provides a further restriction here 
which is not yet known in general.)
Our main result is Theorem 5 in \S2:

{\it Let $X$ be an indecomposable, non-orientable $PD_3$-complex
such that $\pi$ has infinitely many ends.
Then $\pi\cong\pi^+\rtimes{\mathbb{Z}/2\mathbb{Z}^-}$,
and $\pi^+$ is torsion free, but not free.}

\noindent{By} passing to Sylow subgroups of the torsion in $\pi$, 
we may reduce potential counter-examples to special cases, 
which are eliminated by Lemmas 3 and 4. 
The arguments are similar to those of \cite{Hi12}.

I would like to thank B.Hanke for alerting me to the necessity of considering the present case.
I would also like to thank the referee for detailed suggestions as to improving the exposition 
of this work.

\section{notation and major cited results}

In order that this paper be reasonably self-contained we shall 
give here some of the notation and results used in \cite{Hi12}.

Let $X$ be a $PD_3$-complex, 
with fundamental group $\pi$ and orientation character $w$,
and let $X^+$ be the orientable covering space, with fundamental group $\pi^+=\mathrm{Ker}(w)$.
If $H\leq\pi$ then we shall write $H^+=H\cap\pi^+$.
It is convenient to say that such a subgroup $H$ is {\it orientable\/} if $H=H^+$.
(This usage depends upon the orientation character $w$.)
Let $\mathbb{Z}/2\mathbb{Z}^-$ denote a subgroup of order two on which $w\not=1$.

If $G$ is a group $|G|$, $G'$ and $\zeta{G}$ shall denote the order, 
commutator subgroup and centre of $G$, while if $H\leq{G}$ then $C_G(H)$
and $N_G(H)$ are the centralizer and normalizer, respectively.
Let $F(r)$ be the free group of rank $r$.

If $R$ is a ring two finitely presentable left $R$-modules 
$M$ and $N$ are {\it stably isomorphic\/} 
if $M_1\oplus{R}^a\cong{N}\oplus{R}^b$ for some $a,b\geq0$.
Let $[M]$ denote the stable isomorphism class of $M$.

A homomorphism $w:G\to\{\pm1\}$ defines an anti-involution of $\mathbb{Z}[G]$
by $\bar{g}=w(g)g^{-1}$, for all $g\in{G}$.
Tietze move considerations show that if $A$ is any finite presentation matrix for the augmentation ideal $I_G$ then the stable isomorphism class 
of the left $\mathbb{Z}[G]$-module $J_G$ with presentation matrix 
the conjugate transpose $\overline{A}^{tr}$ is well-defined \cite{tu}.

A {\it graph of groups} $(\mathcal{G},\Gamma)$ consists of a graph $\Gamma$ 
with origin and target functions $o$ and $t$ from the set of edges $E(\Gamma)$ 
to the set of vertices $V(\Gamma)$, and a family $\mathcal{G}$ of groups $G_v$ 
for each vertex $v$ and subgroups $G_e\leq{G_{o(e)}}$ for each edge $e$,
with monomorphisms $\phi_e:G_e\to{G_{t(e)}}$.
(We shall usually suppress the maps $\phi_e$ from our notation.)
In considering paths in $\Gamma$ we shall not require that 
the edges be compatibly oriented.

The {\it fundamental group} of $(\mathcal{G},\Gamma)$ is the group 
$\pi\mathcal{G}$ 
with presentation
\[\langle G_v,t_e\mid~t_egt_e^{-1}=\phi_e(g)~\forall{g}\in{G_e},~t_e=1~\forall{e}\in{E(T)}\rangle,\]
where $T$ is some maximal tree for $\Gamma$.
Different choices of maximal tree give isomorphic groups.
We may assume that $(\mathcal{G},\Gamma)$ is {\it reduced}: 
if an edge joins distinct vertices then the edge group is isomorphic to 
a proper subgroup of each of these vertex groups.
The corresponding $\pi$-tree $T$ is incompressible in the terminology of \cite{DD},
and so $T$ and $\mathcal{G}$ are essentially unique, by Proposition IV.7.4 of \cite{DD}.
An edge $e$ is a {\it loop isomorphism} at $v$ if $o(e)=t(e)=v$ 
and the inclusions induce isomorphisms $G_e\cong{G_v}$.

Since fundamental groups of $PD_n$-complexes are $FP_2$ \cite{wall}, 
$\pi$ is the fundamental group of a finite graph of groups $(\mathcal{G},\Gamma)$,
where all vertex groups are finite or have one end and all edge groups are finite.
(See Theorem VI.6.3 of \cite{DD}.)
We may assume that $\pi$ is indecomposable as a proper free product, 
by the Splitting Theorem, and so $(\mathcal{G},\Gamma)$ is {\it indecomposable}: 
all edge groups are nontrivial.
A graph of groups  $(\mathcal{G},\Gamma)$ is {\it admissible\/} if it is reduced, 
all vertex groups are finite or one-ended groups and all edge groups are
nontrivial finite groups.

Turaev gave the following characterization of the group-pairs $(\pi,w)$ 
which may be realized by finite $PD_3$--complexes \cite{tu}.

\begin{thm}
Let $\pi$ be a finitely presentable group and $w:\pi\to\{\pm1\}$ 
a homomorphism.
Then there is a finite $PD_3$-complex $K$ with $\pi_1(K)\cong\pi$ and 
$w_1(K)=w$ if and only if $[I_\pi]=[J_\pi]$.
\end{thm}

We wish to adapt the results from \S7 of \cite{Hi12} to the cases when $\pi$ 
has infinitely many ends and $w\not=1$.
In particular, we use the following two results to
to control the possible edge groups.
\begin{enumerate}
\item(Crisp's Theorem)
{\sl If $X$ is a $PD_3$-complex and $g\in\pi=\pi_1(X)$ has prime order $p$ 
and infinite centralizer $C_\pi(g)$ then $p=2$, 
$g$ is orientation-reversing and $C_\pi(g)$ has two ends.}
\item(the normalizer condition)
{\sl
a proper subgroup of a nilpotent group
is properly contained in its normalizer.}
\end{enumerate}
These are Theorem 17 of \cite{Cr} and Proposition 5.4.2 of \cite{rob}, respectively.
Note also that if $G$ is a finite subgroup of $\pi$ then the centralizer $C_\pi(G)$ has finite index in the 
normalizer $N_\pi(G)$.

The main result (Theorem 6 below) involves consideration of the
finite groups with periodic cohomology, of period dividing 4.
A finite group has cohomological period 2 if and only if it is cyclic, 
and has cohomological period 4 if and only if it is a product $B\times{\mathbb{Z}/d\mathbb{Z}}$ with $(|B|,d)=1$,
where $B$ is a generalized quaternionic group 
${\mathbb{Z}/a\mathbb{Z}\rtimes{Q(2^i)}}$ (with $a$ odd), 
an extended binary polyhedral group $T^*_k$
(of order $2^3.3^k$), $O^*_k$ (of order $2^4.3^k$)
or $I^*=SL(2,5)$ (of order $2^3.3.5$)
or a metacyclic group $\mathbb{Z}/a\mathbb{Z}\rtimes_{-1}{\mathbb{Z}/2^e\mathbb{Z}}$ (for some
odd $a$ and $e\ge1$). 

[There seems to be no one reference with a complete proof of the above assertion.
The six families of finite groups with periodic cohomology are
determined in pages 142-150 of \cite{AM}:
\begin{enumerate}
\item $\mathbb{Z}/a\mathbb{Z}\rtimes{\mathbb{Z}/b\mathbb{Z}}$;

\item $\mathbb{Z}/a\mathbb{Z}\rtimes(\mathbb{Z}/b\mathbb{Z}\times{Q(2^i)})$, $i\geq3$;

\item $\mathbb{Z}/a\mathbb{Z}\rtimes(\mathbb{Z}/b\mathbb{Z}\times{T^*_k})$, $k\geq1$;

\item $\mathbb{Z}/a\mathbb{Z}\rtimes(\mathbb{Z}/b\mathbb{Z}\times{O^*_k})$, $k\geq1$;

\item $(\mathbb{Z}/a\mathbb{Z}\rtimes{\mathbb{Z}/b\mathbb{Z}})\times{SL(2,p)}$, $p\geq5$ prime;

\item $\mathbb{Z}/a\mathbb{Z}\rtimes(\mathbb{Z}/b\mathbb{Z}\times{TL(2,p)})$, $p\geq5$ prime.

\end{enumerate}
Here $a$, $b$ and the order of the quotient by the metacyclic normal subgroup
$\mathbb{Z}/a\mathbb{Z}\rtimes{\mathbb{Z}/b\mathbb{Z}}$ are relatively prime.
See pages 142--150 of \cite {AM} for further details on the groups $TL(2,p)$ (with $TL(2,p)'\cong{SL(2,p)}$, of index 2) 
and the actions in the semidirect products.
If such a group $G$ contains a semidirect product $\mathbb{Z}/m\mathbb{Z}\rtimes_\theta{\mathbb{Z}/n\mathbb{Z}}$, 
where $\theta$ has image of order $k$, 
then the cohomological period of $G$ is a multiple of $2k$.
(See Exercise 6 on page 159 of \cite{Br}.)
The class of groups of period dividing 4 follows on applying this criterion to the groups of the above list.]

\section{3-manifolds}

The result is relatively easy (and no doubt well-known)
in the case of irreducible  3-manifolds,
as we may use the Sphere Theorem, as strengthened by Epstein \cite{Ep}.

\begin{theorem}
Let $M$ be an indecomposable, non-orientable $3$-manifold 
with fundamental group $\pi$.
If $\pi$ has infinitely many ends then $\pi\cong\pi^+\rtimes{\mathbb{Z}/2\mathbb{Z}^-}$
and $\pi^+$ is torsion free, but not free.
\end{theorem}

\begin{proof}
Let $\mathcal{P}$ be a maximal set of pairwise non-parallel 2-sided projective planes in $M$.
Then $\mathcal{P}$ is nonempty, since $M$ is indecomposable and $\pi$ has infinitely many ends.
In particular, $\pi\cong\pi^+\rtimes{\mathbb{Z}/2\mathbb{Z}^-}$, 
since the inclusion of a member of $\mathcal{P}$ splits $w=w_1(M):\pi\to{\mathbb{Z}/2\mathbb{Z}}$.
Let $\mathcal{P}^+$ be the preimage of $\mathcal{P}$ in $M^+$.
Then $\mathcal{P}^+$ is a set of disjoint 2-spheres in $M^+$,
and the components of $M^+\setminus\mathcal{P}^+$  each double cover a
component  of $M\setminus\mathcal{P}$.
Each such component of $M\setminus\mathcal{P}$ is indecomposable \cite{Ep}.

Suppose that $M\setminus\mathcal{P}$ has a component $Y$ 
with virtually free fundamental group.
Then the double $DY$ is indecomposable (cf. Lemma 2.4 of \cite{Hi12}), 
non-orientable and $\pi_1(DY)$ is virtually free.
Moreover, $\pi_1(DY)\cong\mathbb{Z}\oplus{\mathbb{Z}/2\mathbb{Z}^-}$,
since the inclusion of a boundary component of $Y$ splits $w$.
(See Theorems 7.1 and 7.4 of \cite{Hi12}.)
But then $DY\cong{RP^2\times{S^1}}$, and so $Y\cong{RP^2\times[0,1]}$.
This is contrary to the hypothesis that the members of $\mathcal{P}$ are non-parallel.
Thus the components of $M\setminus\mathcal{P}$ are punctured aspherical 3-manifolds.

Let $\Gamma$ be the graph with vertex set $\pi_0(M\setminus\mathcal{P})$ and edge set
$\mathcal{P}$, with an edge joining contiguous components.
Then $\pi^+\cong{G}*F(s)$,
where $G$ is a free product of $PD_3$-groups (corresponding to
the fundamental groups of the components of $M\setminus\mathcal{P}$),
and $s=\beta_1(\Gamma)$.
Hence $\pi^+$ is torsion free.
\end{proof}

We remark also that each component $Y$ of $M\setminus\mathcal{P}$ has an even number 
of boundary components, since $\chi(\partial{Y})$ is even
(for any odd-dimensional manifold $Y$), by Poincar\'e duality.
Thus the vertices of the graph $\Gamma$ have even valence.

{\it Example.}
The canonical involution $\iota$ of the topological group $T^3=\mathbb{R}^3/\mathbb{Z}^3$ 
has 8 isolated fixed points (the points of order 2). 
Let $X$ be the complement of an equivariant open regular neighbourhood of the fixed point set,
and let $M=D(X/\langle\iota\rangle)$.
Then $M$ is indecomposable and non-orientable, and $\pi\cong(\mathbb{Z}^3*\mathbb{Z}^3*F(7))\rtimes\mathbb{Z}/2\mathbb{Z}^-$.

\section{$PD_3$-complexes}

Suppose now that $X$ is an indecomposable $PD_3$-complex, 
with fundamental group $\pi$ and orientation character  $w$.
Then $\pi$ is finitely presentable, and so $\pi\cong\pi\mathcal{G}$,
where $(\mathcal{G},\Gamma)$ is an admissible graph of groups.

\begin{lemma}
Let $X$ be an indecomposable, non-orientable $PD_3$-complex with
$\pi=\pi_1(X)\cong\pi\mathcal{G}$,
where $(\mathcal{G},\Gamma)$ is an admissible graph of groups.
\begin{enumerate}
\item
if $e$ is an edge with $G_{o(e)}$ or $G_{t(e)}$ infinite then $G_e=\mathbb{Z}/2\mathbb{Z}^-$;
\item
if $X\not\simeq{S^2}\tilde\times{S^1}$ then $\pi\cong\pi^+\rtimes\mathbb{Z}/2\mathbb{Z}^-$;
\item
if all finite vertex groups are $2$-groups then they are non-orientable
and all edge groups are $\mathbb{Z}/2\mathbb{Z}^-$.
\end{enumerate}
\end{lemma}

\begin{proof}
Suppose first that the vertex groups are all finite.
Then $X\simeq{S^2}\tilde\times{S^1}$ (if all the vertex groups are orientation preserving) 
or $RP^2\times{S^1}$ (otherwise), by  Theorems 7.1 and 7.4 of \cite{Hi12}, respectively, 
and so the lemma holds.
Hence we may assume that $(\mathcal{G},\Gamma)$ has at least one infinite vertex group 
$G_v$ and at least one edge $e$ with $o(e)=v$ or $t(e)=v$.
If $w(g)=1$ for some $g\in{G_e}$ of prime order then both $G_{o(e)}^+$ and $G_{t(e)}^+$ would be finite, by
Theorem 14 of \cite{Cr}.
But then $G_v$ would be finite, contrary to hypothesis.
Thus $G_e=\mathbb{Z}/2\mathbb{Z}^-$,
and the inclusion of $G_e$ into $\pi$ splits $w$,
so $\pi\cong\pi^+\rtimes{\mathbb{Z}/2\mathbb{Z}^-}$.

Suppose that all finite subgroups are 2-groups.
Let  $f$ be an edge such that the vertex groups
$G_{o(f)}$ and $G_{t(f)}$ are finite.
If $G_f=G_{o(f)}$ (or $G_{t(f)}$) then $f$ must be a loop isomorphism,
since  $(\mathcal{G},\Gamma)$ is reduced.
But then $C_\pi(G_f)$ is infinite, and so $G_f=\mathbb{Z}/2\mathbb{Z}^-$,
by Crisp's Theorem.
Since $(\mathcal{G},\Gamma)$ is reduced, $f$ must be the only edge, 
contrary to the assumption that there is an infinite vertex group.
Thus we may assume that $G_{o(f)}$ and $G_{t(f)}$ each properly contain $G_f$. 
Since $G_{o(f)}$ and $G_{t(f)}$ are 2-groups and hence nilpotent, 
$N_\pi(G_f)$ is infinite, by  the normalizer condition.
Since $C_\pi(G_f)$ has finite index in $N_\pi(G_e)$ we must 
have $G_f=\mathbb{Z}/2\mathbb{Z}^-$, by Crisp's Theorem.
Since $\Gamma$ is connected it follows easily that every finite vertex group is non-orientable 
and every edge group is $\mathbb{Z}/2\mathbb{Z}^-$.
\end{proof}

The next two lemmas consider two parallel special cases,
involving a prime $p$, which is odd or 2, respectively.

\begin{lemma}
Let $X$ be an indecomposable $PD_3$-complex with $\pi=\pi_1(X)$ $\cong\kappa\rtimes{W}$,
where $\kappa$ is orientable and torsion free, and $W$ has order $2p$, for some odd prime $p$.
Then $X$ is orientable.
\end{lemma}

\begin{proof}
Suppose that $X$ is not orientable.
Then $\pi$ and $\kappa$ are infinite.
Since $\pi$ has a subgroup $W$ of finite order $>2$,  
we may assume that $\pi\cong\pi\mathcal{G}$, 
where $(\mathcal{G},\Gamma)$ is an admissible graph of 
groups with $r\geq1$ finite vertex groups and at least one edge.
Let $s=\beta_1(\Gamma)$.

Each finite vertex group is mapped injectively by any projection from $\pi$ onto $W$ with kernel $\kappa$.
If a vertex group $G_v$ has prime order then every edge $e$ with one vertex at $v$ 
is a loop isomorphism, since $(\mathcal{G},\Gamma)$ is reduced.
But then $\Gamma$ has just one vertex and $\pi\cong{G_v}\rtimes{F}$, 
which contradicts the hypothesis.
Hence all finite vertex groups are isomorphic to $W$.
If an edge $e$ is a loop isomorphism then $G_e^+\cong\mathbb{Z}/p\mathbb{Z}$ has infinite normalizer,
contradicting Crisp's Theorem.
If there is an edge $e$ with $G_e$ of order $p$ then both 
of the vertex groups  $G_{o(e)}$ and $G_{t(e)}$ are finite, by Lemma 2.
But then $[G_{o(e)}:G_e]=[G_{t(e)}:G_e]=2$, and so $N_\pi(G_e)$ is infinite,
which again contradicts Crisp's Theorem.
Since the orientation character $w$ factors through $W$ it follows that 
every edge group is $\mathbb{Z}/2\mathbb{Z}^-$ and $w$ is nontrivial on every vertex group.

Since each edge group is $\mathbb{Z}/2\mathbb{Z}^-$,
$w$ is nontrivial on each vertex group and so $\pi^+=\pi\mathcal{G}^+$
is the fundamental group of a graph of groups $(\mathcal{G}^+,\Gamma)$ with the same underlying graph $\Gamma$,
trivial edge groups and vertex groups $G_v^+$, for all $v\in{V(\Gamma)}$.
Hence $\pi^+\cong{G*F(s)*P}$, where $G$ is a free product of orientable $PD_3$-groups
and $P$ is a free product of $r$ copies of $\mathbb{Z}/p\mathbb{Z}$.
We have $P\cong{F(t)\rtimes\mathbb{Z}/p\mathbb{Z}}$ for some $t\geq0$.
(In fact, $t=(p-1)(r-1)$, by a simple
virtual Euler characteristic argument.)

Let $a\in\pi$ be such that $a^2=1$ and $w(a)=-1$,
and let $\lambda\cong\kappa\rtimes{\mathbb{Z}/2\mathbb{Z}^-}$ be the subgroup generated by $\kappa$ and $a$.
Then $\lambda$ is also the group of a $PD_3$-complex, since it has finite index in $\pi$.
The involution of $\pi^+$ induced by conjugation by $a$ maps each 
indecomposable factor which is not infinite cyclic to a conjugate of an isomorphic factor \cite{Gi}.
However, its behaviour on the free factor $F(s)$ may be more complicated.

Let $w:\mathbb{Z}[\pi]\to{R}=\mathbb{Z}[\langle{a}\rangle]=\mathbb{Z}[a]/(a^2-1)$
 be the linear extension of the orientation character.
Then $I_{\langle{a}\rangle}\cong\widetilde{\mathbb{Z}}=R/(a+1)$.
We may factor out the action of $\pi^+$ on a $\mathbb{Z}[\pi]$-module
by tensoring with $R$.
The derived sequence of the functor $R\otimes_w-$ applied to the augmentation sequence
\[
0\to{I_\pi}\to\mathbb{Z}[\pi]\to\mathbb{Z}
\]
 gives an exact sequence
\[
0\to{H_1(\pi;R)}=\kappa/\kappa'\to{R}\otimes_wI_\pi\to{R}\to\mathbb{Z}\to0.
\]
The inclusion of $\langle{a}\rangle$ into $\pi$ splits the epimorphism from
${R}\otimes_wI_\pi$ onto $I_{\langle{a}\rangle}$, and so
$R\otimes_wI_\pi\cong\kappa/\kappa'\oplus\widetilde{\mathbb{Z}}.$

Let $\gamma$ be the normal subgroup of $\pi$ generated by $G\cup{F(s)}$,
and let $H$ be the image of $\gamma$ in $\kappa/\kappa'$.
Then similar arguments show that
\[
R\otimes_wI_\pi=H\oplus(R\otimes_wI_{\pi/\gamma})
\]
 and
\[
R\otimes_wI_\lambda=H\oplus(R\otimes_wI_{\lambda/\gamma}).
\]

The groups $P$ and its normal subgroup $F(t)$ have presentations
\[
P=\langle{b_i,~1\leq{i}\leq{r}}\mid{b_i^p=1},~\forall{i}\rangle
\]
and
\[
F(t)=\langle{x_{i,j}, ~1\leq{i}\leq{r-1},~1\leq{j}\leq{p-1}}\mid~\rangle,
\]
where $x_{i,j}$ has image $b_1^jb_{i+1}^{-j}$ in $P$, 
for $1\leq{i}\leq{r-1}$ and $1\leq{j}\leq{p-1}$.
(If $p=2$ we shall write $x_i$ instead of $x_{i,1}$, for $1\leq{i}\leq{r-1}$.)

The quotient $\pi/\langle\langle{G}\rangle\rangle$ is the fundamental group 
of the (possibly {\it un}reduced) graph of groups  $(\overline{\mathcal{G}},\Gamma)$
with vertex groups $W$ (or $\mathbb{Z}/2\mathbb{Z}^-$) and edge groups $\mathbb{Z}/2\mathbb{Z}^-$,
obtained by replacing each infinite vertex group $G_v$  of $(\mathcal{G},\Gamma)$ 
by $G_v/G_v^+=\mathbb{Z}/2\mathbb{Z}^-$.
Thus if $W$ is abelian (and so has an unique element of order 2) then
$\pi/\langle\langle{G}\rangle\rangle\cong(F(s)*P)\times{\mathbb{Z}/2\mathbb{Z}^-}$.
Hence $\pi/\gamma\cong{P}\times\mathbb{Z}/2\mathbb{Z}^-$ and
$\lambda/\gamma\cong{F(t)}\times\mathbb{Z}/2\mathbb{Z}^-$,
and so
\[R\otimes_wI_{\pi/\gamma}\cong(R/(p,a-1))^r\oplus\widetilde{\mathbb{Z}}
\]
 and 
\[
R\otimes_wI_{\lambda/\gamma}\cong(R/(a-1))^t\oplus\widetilde{\mathbb{Z}}=\mathbb{Z}^t\oplus\widetilde{\mathbb{Z}}.
\]
The quotient ring $R/pR=\mathbb{F}_p[a]/(a^2-1)$ is semisimple, 
and so $p$-torsion $R$-modules have unique factorizations as sums of simple modules.
Since $I_\pi\otimes_w{R}$ and $I_\lambda\otimes_w{R}$ satisfy Turaev's criterion
(and projective $R$-modules are $\mathbb{Z}$-torsion free),  
the $p$-torsion submodule of $R\otimes_wI_\pi$ has equally many summands 
of types ${R/(p,a-1)}$ and $R/(p,a+1)$,
and similarly for $R\otimes_wI_\lambda$.
Since $R\otimes_wI_{\lambda/\gamma}$ is $p$-torsion free,
the number of summands of types $R/(p,a-1)$ and $R/(p,a+1)$ in $H$ must also be equal.  
On the other hand, $R\otimes_wI_{\pi/\gamma}$ has $r>0$ summands of type $R/(p,a-1)$
and none of type $R/(p,a+1)$.
These conditions are inconsistent, and so $\pi$ is not the group of a non-orientable $PD_3$-complex.

If $W$ is not abelian then has an unique conjugacy class of elements of order 2, and
 $\pi/\gamma\cong{P\rtimes{\mathbb{Z}/2\mathbb{Z}^-}}$ and $\lambda/\gamma\cong{F(t)}\rtimes{\mathbb{Z}/2\mathbb{Z}^-}$
have presentations
\[
\langle{a,b_i,~1\leq{i}\leq{r}}\mid{a^2=1},~b_i^p=1,~ab_ia=b_i^{-1}~\forall{i}\rangle,
\]
and
\[
\langle{a,x_{i,j},~1\leq{i}\leq{r-1},~1\leq{j}\leq{p-1}}\mid{a^2=1},
~ax_{ij}a=x_{i,p-j} ~\forall{i,j}\rangle,
\]
respectively. (In particular,  $\lambda/\gamma\cong{F(t/2)*\mathbb{Z}/2\mathbb{Z}^-}$.)
In this case 
\[R\otimes_wI_{\pi/\gamma}\cong(R/(p,a+1))^r\oplus\widetilde{\mathbb{Z}}
\] and 
\[
R\otimes_wI_{\lambda/\gamma}\cong{R^{t/2}}\oplus\widetilde{\mathbb{Z}}.
\]
Consideration of the $p$-torsion submodules again shows that  $R\otimes_wI_\pi$ 
and $R\otimes_wI_\lambda$ cannot both satisfy Turaev's criterion, 
and hence that $\pi$ is not the group of a non-orientable $PD_3$-complex.
Thus $X$ must be orientable.
\end{proof}

The case when the prime $p=2$ involves slightly different calculations.

\begin{lemma}
Let $X$ be an indecomposable $PD_3$-complex with $\pi=\pi_1(X)$ $\cong\kappa\rtimes{W}$,
where $\kappa$ is orientable and torsion free, and $W$ has order $4$. 
Then $X$ is orientable.
\end{lemma}

\begin{proof}
As in Lemma 3, we suppose that $X$ is not orientable, so $\pi$ and $\kappa$ are infinite,
and may assume that $\pi\cong\pi\mathcal{G}$, 
where $(\mathcal{G},\Gamma)$ is an admissible graph of 
groups with $r\geq1$ finite vertex groups and at least one edge.
We continue the notation $P$, $\gamma$, $a$ and $R$ from Lemma 3.

The inclusions of the edge groups split $w$, by Lemma 2.
In this case $W\cong(\mathbb{Z}/2\mathbb{Z})^2=\mathbb{Z}/2\mathbb{Z}\oplus{\mathbb{Z}/2\mathbb{Z}^-}$
and has two orientation reversing elements.
Note that  $P$ is now a free product of $r$ copies of $\mathbb{Z}/2\mathbb{Z}$.

The quotient $\pi/\gamma$ is the group of a finite graph of groups with all vertex groups $W$ 
and edge groups $\mathbb{Z}/2\mathbb{Z}^-$. 
Since $P$ is a free product of cyclic groups, $\pi/\gamma$ has a presentation
\[
\langle{a, b_i,~ 1\leq{i}\leq{r}}\mid{a^2=1},~b_i^2=(aw_i)^2=(aw_ib_i)^2=1,~\forall{i}\rangle,
\]
where $w_i=1$ and $w_i\in{F(t)}$ for  $2\leq{i}\leq{r}$.
The free subgroup $F(t)$ has basis $\{x_i|1\leq{i}\leq{r-1}\}$, 
where $x_i$ has image $b_1b_{i+1}$ in $P$,
and $\lambda/\gamma$ has a presentation
\[
\langle{a,x_i,~1\leq{i}\leq{r-1}}\mid{a^2=1},~ax_ia=x_ib_{i+1}w_{i+1}b_{i+1}w_{i+1}^{-1},~\forall{i}\rangle.
\]
In this case 
\[R\otimes_wI_{\pi/\gamma}\cong(R/(2,a-1))^r\oplus\widetilde{\mathbb{Z}}
\] and 
\[
R\otimes_wI_{\lambda/\gamma}\cong\mathbb{Z}^{r-1}\oplus\widetilde{\mathbb{Z}}.
\]
Since $R/(2,a+1)=R/(2,a-1)$, torsion considerations do not appear to help.
If $r>1$ we may instead compare the quotients by the 
$\mathbb{Z}$-torsion submodules, as in Lemma 7.3 of \cite{Hi12}, 
since finitely generated torsion free $R$-modules are direct sums of copies of $R$,
$\mathbb{Z}$ and $\widetilde{\mathbb{Z}}$,
by Theorem 74.3 of \cite{CR}.
We again conclude that $\pi$ is not the group of a non-orientable $PD_3$-complex.

The case when $p=2$ and $r=1$ requires a little more work.
Let $N$ be the $R$-module presented by the transposed conjugate of
$\left(\begin{smallmatrix}
2\\ 
a-1
\end{smallmatrix}\right)$.
If $\{e,f\}$ is the standard basis for $R^2$ then $N={R^2/R(2e+(a+1)f)}$.
The $\mathbb{Z}$-torsion submodule of $N$ is generated by the image of $(a-1)e$, 
and has order 2, but is not a direct summand.
The quotient of $N$ by its $ \mathbb{Z}$-torsion submodule is generated by the images 
of $e$ and $f-e$, and is a direct sum $\mathbb{Z}\oplus\widetilde{\mathbb{Z}}$.
In particular, it has no free summand.
It now follows easily that $H\oplus\widetilde{\mathbb{Z}}\oplus{R/(2,a-1)}$ is not 
stably isomorphic to $H\oplus\widetilde{\mathbb{Z}}\oplus{N}$.
Therefore $I_\pi$ and $I_\lambda$ cannot both satisfy Turaev's criterion,
and so $\pi$ is not the group of a non-orientable $PD_3$-complex.
Thus $X$ must be orientable.
\end{proof}

Our final lemma is needed to cope with three exceptional cases.

\begin{lemma}
Let $G=H\rtimes\mathbb{Z}/2\mathbb{Z}$, where $H=T_1^*,O_1^*$ or $I^*$.
Suppose that every element of $G$ divisible by $4$ is in $H$.
Then $G$ has a subgroup $W$ of order $6$ such that $[W:W\cap{H}]=2$.
\end{lemma}

\begin{proof}
Let $g$ be an element of order 2 whose image generates $G/H$.

Suppose first that $H=T_1^*$, with presentation
\[
\langle{x,y,z}|{x^2=(xy)^2=y^2}, ~z^3=1,~zxz^{-1}=y,~zyz^{-1}=xy\rangle.
\]
Then $\zeta{T_1^*}=\langle{x^2}\rangle$ has order 2.
The outer automorphism group $Out(T_1^*)$ is generated 
by the class of the involution $\rho$ which sends $x,y$ and $z$ 
to $y^{-1},x^{-1}$ and $z^2$, respectively.
(See page 221 of \cite{Hi}.)
Hence $\rho$ preserves the subgroup $S$ of order 3 generated by $z$.

If conjugation by $g$ induces an inner automorphism of $T_1^*$
there is an $h\in{T_1^*}$ such that $gxg^{-1}=hxh^{-1}$ for all $x\in{T_1^*}$.
Then $gh=hg$ and $h^2$ is central in $T_1^*$, so $(h^{-1}g)^2=h^2$ has order dividing 4.
Therefore $h^{-1}g$ has order 2, by hypothesis.

Otherwise we may assume that there is an $h\in{G^+}$ 
such that $gxg^{-1}=h\rho(x)h^{-1}$ for all $x\in{T_1^*}$, 
and so $\rho$ is conjugation by $h^{-1}g$.
Since $\rho$ is an involution $(h^{-1}g)^2$ is central in $T_1^*$.
We again see that $h^{-1}g$ has order 2.
In each case $h^{-1}g$ normalizes $S$, so
the subgroup  $W$ generated by $S$ and $h^{-1}g$ has order $6$,
while $h^{-1}g\not\in{H}$, so $[W:W\cap{H}]=2$.

The commutator subgroup of $O_1^*$ is $T_1^*$.
Since this is a characteristic subgroup, it is preserved by $g$.
The group $T_1^*$ is a non-normal subgroup of $I^*$,  of index 5.
Since $g$ acts as an involution on the set of conjugates of $T_1^*$ 
we may assume that it preserves $T_1^*$.
In each case the lemma follows easily from its validity for $H=T_1^*$.
\end{proof}

We may now give our main result.

\begin{theorem}
Let $X$ be an indecomposable, non-orientable $PD_3$-complex
such that $\pi=\pi_1(X)$ has infinitely many ends.
Then 
\begin{enumerate}
\item
$\pi\cong\pi\mathcal{G}$ where $(\mathcal{G},\Gamma)$ 
is an admissible graph of groups with all vertex groups one-ended and all edge groups $\mathbb{Z}/2\mathbb{Z}^-$;
\item$\pi\cong\pi^+\rtimes\mathbb{Z}/2\mathbb{Z}^-$;
\item $\pi^+\cong{G}*H$, where $G$ is a nontrivial free product of $PD_3$-groups
and $H$ is  free.
In particular, $\pi^+$ is torsion free.
\end{enumerate}
\end{theorem}

\begin{proof}
Let $\pi\cong\pi\mathcal{G}$, where $(\mathcal{G},\Gamma)$ 
is an admissible graph of groups.
At least one vertex group is infinite,
for otherwise $\pi$ has two ends,
by Theorems 7.1 and 7.4 of \cite{Hi12}.
Hence $\pi^+\cong{G}*H$, 
where $G$ is a nontrivial free product of $PD_3$-groups
and $H$ is virtually free.
Therefore $\pi^+$ is virtually torsion free.
Let $\kappa$ be the intersection of the conjugates in $\pi$ 
of a torsion free subgroup of finite index in $\pi^+$,
and let $\phi:\pi\to\pi/\kappa$ be the canonical projection.
Then $\kappa$ is orientable, torsion free and of finite index,
and $w$ factors through $\pi/\kappa$.

If $F$ is a finite subgroup then $\phi|_F$ is injective, 
and $\phi^{-1}(\phi(F))$ has finite index in $\pi$. 
Hence $\phi^{-1}(\phi(F))$ has a graph of groups structure in which 
all finite vertex groups are isomorphic to subgroups of $F$.
In particular, if $F$ is a non-orientable 2-group then
at least one of these vertex groups is a non-orientable 2-group,
and so there is a $g\in{F}$ such that $g^2=1$ and $w(g)=-1$, 
by part (3) of Lemma 2. 
Hence if, moreover, $F$ is cyclic then it has order 2.

Assume that there is a non-orientable finite vertex group $G_v$.
Then $G_v$ has a non-orientable Sylow 2-subgroup $S(2)$, 
and so there is a $g\in{S(2)}$ such that $g^2=1$ and $w(g)=-1$.
The orientable subgroup $G_v^+$ has periodic cohomology, 
with period dividing 4,
by Theorems 4.3 and 4.6 of \cite{Hi12}.
Moreover every element of $G_v$ divisible by $4$ is in $G_v^+$,
by the argument of the previous paragraph.

Let $g$ be an element of order 2 whose image generates $G_v/G_v^+$.
We may assume that $G_v^+\cong{B}\times\mathbb{Z}/d\mathbb{Z}$, 
where $B$ is either $\mathbb{Z}/a\mathbb{Z}\rtimes{Q(2^i)}$ (with $a$ odd
and $i\geq3$), 
 $T^*_k$ or $O^*_k$ (for some $k\geq1$), $I^*$ 
or $\mathbb{Z}/a\mathbb{Z}\rtimes_{-1}{\mathbb{Z}/2^e\mathbb{Z}}$
(with $a$ odd and $e\geq1$),
as in  the penultimate paragraph of \S1 above.
Suppose first that $G_v^+$ is not a 2-group. 
Then it has a nontrivial subgroup $S$ of order $p$, for some odd prime $p$.
If $d>1$ we may assume that $p$ divides $d$,
and then $S$ is characteristic in $G_v^+$.
This is also the case if $G_v^+\cong\mathbb{Z}/a\mathbb{Z}\rtimes{Q(8)}$ 
or $\mathbb{Z}/a\mathbb{Z}\rtimes_{-1}\mathbb{Z}/2^e\mathbb{Z}$
with $a$ odd (so $p$ divides $a$), 
or $G_v^+\cong{T^*_k}$ or $O^*_k$ with $k>1$ (so $p=3$).
In these cases $S$ is normalized by $g$, 
and the subgroup $H$ generated by $S$ and $g$ has order $2p$.
The remaining possibilities are that  $G_v^+\cong{T_1^*\times\mathbb{Z}/d\mathbb{Z}}$,
$O_1^*\times\mathbb{Z}/d\mathbb{Z}$ or $I^*\times\mathbb{Z}/d\mathbb{Z}$.
For these cases we appeal to Lemma 5, to see that $G_v$
has a non-orientable  subgroup $W$ of order $2p$.

Since $\phi^{-1}\phi(W)$ has finite index in $\pi$, 
it is again the group of a non-orientable $PD_3$-complex.
This complex has an indecomposable factor whose group has $W$ 
as one of its finite vertex groups, and so has
fundamental group $\kappa\rtimes{W}$.
But this factor is non-orientable, and so 
contradicts Lemma 3.

Therefore we may assume that $G_v^+$ is a 2-group.
If $S(2)^+\not=1$ (i.e., if $G_v^+$ is a nontrivial 2-group) 
it is cyclic or generalized quaternionic,
and so has an unique central element of order 2.
(Cf. Lemma 2.1 of \cite{Hi12}.)
Hence $G_v$ has a finite index subgroup $W\cong\mathbb{Z}/2\mathbb{Z}\times\mathbb{Z}/2\mathbb{Z}^-$. 
As before, passage to $\phi^{-1}\phi(W)$ leads to a contradiction, by Lemma 4.

Therefore all finite vertex groups are orientable.
But the graph $\Gamma$ is connected, and any edge connecting a finite vertex group 
to an infinite vertex group must be non-orientable, as in Lemma 2.
Since there is at least one infinite vertex group
there can be no finite vertex groups.

The second assertion follows from part (2) of Lemma 2,
and $\pi^+=\pi\mathcal{G}^+$
is the fundamental group of a graph of groups $(\mathcal{G}^+,\Gamma)$ with the same underlying graph $\Gamma$,
trivial edge groups and vertex groups $G_v^+$ all $PD_3$-groups.
Hence $\pi^+$ is torsion free, but not free.
\end{proof}

As observed at the end of \S2, when $X$ is a 3-manifold and  
$(\mathcal{G},\Gamma)$ is an admissible graph of groups
such that  $\pi=\pi\mathcal{G}$, all vertices of $\Gamma$ have even valence.
Can this observation be extended to the case of $PD_3$-complexes?
Although there are indecomposable $PD_3$-complexes which are not homotopy
equivalent to 3-manifolds  \cite{Hi12,wall},
it remains possible that every indecomposable, non-orientable $PD_3$-complex is
homotopy equivalent to a 3-manifold.

Corollary 7.5 of \cite{Hi12}  follows  immediately from Crisp's Theorem and Theorem 6.
(The argument  in \cite{Hi12} assumed that $\pi$ is virtually free.)

\begin{cor}  
Let $X$ be a $PD_3$-complex and $g\in\pi=\pi_1(X)$ a nontrivial element of finite order.
If $C_\pi(g)$ is infinite then $g$ has order $2$ and is orientation-reversing, 
and $C_\pi(g)=\langle{g}\rangle\times\mathbb{Z}$.
\qed   
\end{cor}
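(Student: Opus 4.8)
The plan is to deduce the Corollary from Theorem 4 together with the structure result of Crisp \cite{Cr}, by reducing to the indecomposable case. First I would recall that a $PD_3$-complex $X$ is homotopy equivalent to a connected sum of indecomposable $PD_3$-complexes, so $\pi=\pi_1(X)$ is a free product $\pi\cong\pi_1\ast\cdots\ast\pi_k\ast F(m)$, where each $\pi_i$ is the fundamental group of an indecomposable factor. By the Kurosh subgroup theorem, a nontrivial torsion element $g$ is conjugate into one of the free factors $\pi_i$, and its centralizer $C_\pi(g)$ is then contained in (a conjugate of) $\pi_i$ up to the usual normal-form considerations; in particular $C_\pi(g)$ infinite forces $C_{\pi_i}(g)$ infinite. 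Thus it suffices to prove the statement when $X$ itself is indecomposable, and moreover when the factor $\pi_i$ is infinite (a finite factor has finite centralizers), i.e. when $\pi_i$ has one or infinitely many ends.

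Next I would split according to the number of ends of the indecomposable group $\pi$ containing $g$. If $\pi$ has one end, then by Crisp's theorem \cite{Cr} $\pi$ has a nontrivial torsion-free normal subgroup, or more precisely the relevant dichotomy from \cite{Cr} applies: an indecomposable $PD_3$-complex with one-ended group has centralizers of torsion elements that are finite unless the group is a specific $\mathbb{Z}$-by-finite type; here I would invoke directly the conclusion of \cite{Cr} that $C_\pi(g)$ infinite and $g$ of finite order forces $g$ to be an orientation-reversing involution with $C_\pi(g)\cong\langle g\rangle\times\mathbb{Z}$. If instead $\pi$ has infinitely many ends, then either $w=1$ on $\pi$, in which case $X$ is orientable and Crisp's orientable result (which says an indecomposable orientable $PD_3$-complex with infinitely many ends has torsion-free fundamental group, hence no such $g$ exists) applies; or $w\neq1$, and Theorem 4 gives $\pi\cong\pi^+\rtimes Z/2Z^-$ with $\pi^+$ torsion free. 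In the latter case any torsion element $g$ has order $2$ and is orientation-reversing (it maps nontrivially to $Z/2Z$ since $\pi^+$ is torsion free), so $g$ is an orientation-reversing involution.

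It then remains to identify $C_\pi(g)$ when $\pi\cong\pi^+\rtimes Z/2Z^-$ and $C_\pi(g)$ is infinite. Here $C_\pi(g)\cap\pi^+=C_{\pi^+}(g)$ is the fixed subgroup of the involution of $\pi^+$ induced by conjugation by $g$. Since $\pi^+\cong G_0\ast G_1$ with $G_0$ virtually free and $G_1$ a free product of $PD_3$-groups, and this involution permutes the free factors up to conjugacy (by \cite{Gi}, as used in the proof of Lemma 3), the fixed subgroup $C_{\pi^+}(g)$ is constrained: if it were to meet a $PD_3$-group factor nontrivially the geometry of the involution on that factor would force it to be small, and the relevant case analysis — essentially that the only way to get an infinite centralizer is to have $g$ acting with an infinite cyclic fixed subgroup coming from an $RP^2\times S^1$-type piece — yields $C_{\pi^+}(g)\cong\mathbb{Z}$, whence $C_\pi(g)\cong\langle g\rangle\times\mathbb{Z}$. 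The main obstacle I anticipate is making this last identification of $C_\pi(g)$ clean: controlling the fixed subgroup of the conjugation involution on a free product, rather than merely knowing the centralizer is infinite, requires either a direct appeal to Crisp's more precise statement or a short Bass–Serre-theoretic argument about fixed points of involutions acting on the tree of the free product decomposition, and one must be careful that no exotic infinite fixed subgroup (e.g. a surface-group-by-$\mathbb{Z}$ phenomenon) can arise — which is precisely where Theorem 4's conclusion that $\pi^+$ is \emph{not} free, combined with \cite{Cr}, is doing the real work.
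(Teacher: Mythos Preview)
Your reduction to the indecomposable case and the use of Theorem~4 to see that any torsion element with infinite centralizer must be an orientation-reversing involution are correct and agree with the paper. But two points need correction.

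First, a factual error: you assert that Crisp's result says an indecomposable \emph{orientable} $PD_3$-complex with infinitely many ends has torsion-free fundamental group. This is false; the exotic example of \cite{Hi12} (with $\pi$ a nontrivial amalgam of two copies of $S_3$ over $Z/3Z$) is orientable, indecomposable, has infinitely many ends, and has torsion. What Crisp actually gives is that a nontrivial finite subgroup of $\pi$ with infinite normalizer must be $Z/2Z^-$; equivalently, an orientation-\emph{preserving} torsion element has finite centralizer. Your conclusion in the orientable case (no such $g$) is right, but for the wrong reason. (Your one-ended discussion is also over-hedged: an indecomposable $PD_3$-complex with one-ended group is aspherical, so $\pi$ is a $PD_3$-group and already torsion free.)

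Second, and more importantly, your plan for identifying $C_\pi(g)$ by analysing the fixed subgroup of the conjugation involution on the free product $\pi^+\cong G_0*G_1$ is both incomplete (as you yourself flag) and unnecessary. The paper's argument is much shorter. Crisp's result already forces $C_\pi(g)$ to have two ends; its maximal finite normal subgroup $F$ then consists entirely of torsion elements with infinite centralizer in $\pi$, so by Crisp again every nontrivial element of $F$ is an orientation-reversing involution, whence $F=\langle g\rangle$. Thus $C_\pi(g)\cong\langle g\rangle\times\mathbb{Z}$ or $\langle g\rangle\times D_\infty$. Theorem~4 eliminates the second option: if $s\in D_\infty$ is an involution then either $s$ or $gs$ lies in $\pi^+$ and is a nontrivial involution there, contradicting the torsion-freeness of $\pi^+$. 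No Bass--Serre or fixed-subgroup analysis of automorphisms of free products is needed.
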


Are there any examples other than $RP^2\times{S^1}$ of indecomposable $PD_3$-complexes 
whose groups have a central element of order 2 with infinite centralizer?

\end{document}